\documentclass[letterpaper, 10 pt, conference]{ieeeconf}
\IEEEoverridecommandlockouts
\usepackage{amsmath,amssymb,amsfonts,mathtools}
\usepackage{float}
\usepackage[dvipsnames]{xcolor}
\usepackage{tikz}
    \usetikzlibrary{calc}
    \usetikzlibrary{positioning}
    \usetikzlibrary{arrows}

\title{\LARGE \bf
Observer-Based Performance-Barrier Event-Triggered Control of $2\times2$ Linear Hyperbolic PDEs
}
\author{Eranda Somathilake$^{1}$ and Mamadou Diagne$^{1}$
\thanks{*This work was supported by the NSF CAREER Award CMMI-2302030 and  the NSF grant CMMI-2222250.}
\thanks{$^{1}$Eranda Somathilake and Mamadou Diagne are with the Department of Mechanical and Aerospace Engineering,
        University of California San Diego, CA 92093, USA
        {\tt\small esomathilake@ucsd.edu, mdiagne@ucsd.edu}}%
}

\newtheorem{theorem}{Theorem}
\newtheorem{lemma}{Lemma}
\newtheorem{proposition}{Proposition}

\newtheorem{definition}{Definition}
\newtheorem{assumption}{Assumption}

\begin{document}
\maketitle
\thispagestyle{empty}
\pagestyle{empty}
\begin{abstract}
Performance-barrier event-triggered control (P-ETC) is a methodology implemented to increase the dwell-times between events while still preserving a prescribed performance of the system under event-triggered control (ETC). This is achieved by considering the performance residual of the system, which is a measure of the system performance with respect to the prescribed performance. This allows the Lyapunov function candidate to deviate from decreasing monotonically. In order to determine the performance residual, it is required to know the full-state information, leading to all work related to P-ETC to be under full-state feedback. In this article, we propose a novel dynamic performance-barrier under output feedback with an exponentially convergent observer. We consider event-triggered boundary control of a class of $2\times2$ linear hyperbolic PDEs with anti-collocated measurements with the control input. Under the proposed P-ETC mechanism, we prove the existence of a minimum dwell-time, and show the global exponential stability of the spatial $L^2$ norm of the solution of the system. Simulation results are presented to validate the theoretical claims.
\end{abstract}
\section{Introduction}

Event-triggered control (ETC) is a feedback control strategy in which the control input is updated aperiodically at discrete instants, referred to as \textit{events}, rather than at fixed sampling times \cite{tabuada2007event,heemels2012introduction}. The control input is applied in a zero-order hold (ZOH) fashion between events; that is, the control input remains constant until the next triggering instant. This framework enables the implementation of feedback control on digital platforms while reducing unnecessary updates. Increasing inter-event times (dwell-times) yields practical benefits. These include communication bandwidth requirements, lower power consumption in networked or remote components, and decreased actuator wear due to fewer updates. Consequently, ETC is particularly attractive for large-scale and networked control systems. 

The events under ETC are determined by monitoring if triggering function defined by the measured states, and user-defined tuning parameters satisfy a particular condition. The triggering condition under ETC can be dynamic or static. In static ETC, the triggering rule depends directly on the system states whereas in dynamic ETC, the triggering rule is based on an auxiliary dynamic variable whose evolution depends on the system's states, providing additional design flexibility and often longer dwell-times \cite{girard2015dynamic}. 

In this article, we consider a class of $2\times2$ linear hyperbolic partial differential equations (PDEs) with anti-collocated boundary actuation and  boundary measurements under a dynamic ETC framework. Hyperbolic PDEs arise in the modeling of transport processes including open channel fluid flow\cite{coron2007,litrico2009,diagne2017backstepping,somathilake2025_etc,somathilake2024_sediment}, traffic systems \cite{yu2019traffic}, and gas pipeline systems  \cite{bhathiya2025_gas}, among others. In such systems, actuation and sensing are typically restricted to the spatial boundaries, while the state evolves over a distributed domain. As a result, full-state measurements are generally unavailable, and control implementation relies on limited boundary information. Moreover, these systems often span large spatial domains, leading to geographically separated sensors and actuators that communicate over bandwidth-constrained networks. Therefore, output-feedback ETC provides a practically motivated framework for hyperbolic PDEs, as it reduces communication and actuation updates.

ETC mechanisms can be designed in two ways: by emulation and by co-design. The former samples a pre-designed continuous control signal at each event while the latter designs the triggering mechanism and the control input simultaneously. The emulation approach is selected under the framework proposed in this paper. Continuous-time boundary control design of linear hyperbolic PDEs has been an area that has been studied extensively with the backstepping method being one of the most prominent methods used \cite{vazquez2026_backstepping_survery}. Some examples of the PDE backstepping method applied to first order linear hyperbolic PDEs include \cite{krstic2008_1xhyperbolic} for a single PDE and \cite{vazquez2011_2x2hyperbolic} for coupled $2\times2$ linear hyperbolic PDEs with boundary measurements. These results were extended to more general classes of  linear hyperbolic PDEs in \cite{meglio2013,hu2016}. We refer the reader to \cite{anfinsen2019adaptive} for results on adaptive control of linear hyperbolic PDEs using the backstepping method. The continuous-time control design considered here is based on \cite{vazquez2011_2x2hyperbolic} while making necessary adjustments for the anti-collocated actuation and measurement which is the focus here. Dynamic ETC via emulation has been applied to systems of PDEs in works such as \cite{rathnayake2021observer,espitia2020observer,bhathiya2026_etc_stability}. Notably, \cite{bhathiya2026_etc_stability} provides an ETC condition that provides global exponential stability (GES) of the spatial $L^2$ norm of the system states compared to the global exponential convergence (GEC) results in previous work. This has been achieved by incorporating switching dynamics to the dynamic variable in the triggering function.

Under ETC, the event times are determined by a triggering condition designed to preserve desired stability and performance properties. By updating the control input only when required by the observed system's state, ETC can increase dwell-times compared to periodic control where the input is updated periodically \cite{heemels2012introduction}. In periodic control, the sampling frequency must be chosen to accommodate worst-case scenarios, often leading to conservative and unnecessarily frequent updates. In contrast, ETC adapts the update instants to the current system behavior, allowing longer dwell-times whenever the system operates away from critical conditions. 

In this article, we propose an observer-based dynamic \textit{performance-barrier} ETC (P-ETC) methodology that further increases dwell-times, based on the work in \cite{ong2024performance,rathnayake2025performance,zhang2025performance}. In \cite{ong2024performance}, the authors introduced the concept of a performance residual, which quantifies how well the system satisfies a prescribed performance specification and allows for longer dwell-times accordingly. The performance criterion is defined by requiring the Lyapunov function to remain bounded by an exponentially decaying function determined by the initial condition. This decaying bound is referred to as the \textit{performance-barrier}, and the performance residual is defined as the difference between the value of the Lyapunov function and this barrier. The framework was developed for ordinary differential equations (ODEs) in \cite{ong2024performance} and later extended to a class of reaction-diffusion PDEs in \cite{bhathiya2026_etc_stability} and to a class of linear hyperbolic PDEs in \cite{zhang2025performance}. By incorporating the performance residual into a dynamic triggering condition, the requirement for the Lyapunov function to decrease monotonically can be relaxed, thereby allowing longer dwell times. However, evaluating the performance residual requires knowledge of the Lyapunov function at all times. Consequently, existing P-ETC results are restricted to full-state feedback systems. In this paper, we propose a novel dynamically evolving performance-barrier applicable to systems with measurement-only feedback. Hence, we obtain an estimated performance barrier based on the measured states to be used in the triggering function. We incorporate the results in \cite{bhathiya2026_etc_stability} to present an observer-based P-ETC methodology that achieves GES of the spatial $L^2$ norm, as opposed to prior state-feedback P-ETC designs for PDEs in \cite{rathnayake2025performance,zhang2025performance} with GEC of the spatial $L^2$ norm.

The paper is structured as follows. Section~\ref{sec:problem_statement} presents the system under consideration and the preliminaries required for the following sections. In Section~\ref{sec:p_etc}, the P-ETC mechanism is presented along with the stability results under the proposed control mechanism. Finally, numerical simulations and conclusions are presented in Sections~\ref{sec:problem_statement} and \ref{sec:conclusion} respectively.
\subsection{Notations}
Let $\mathbb{R}$, $\mathbb{R}_{>0}$, and $\mathbb{R}_{\geq0}$ be the set of real numbers, the set of real positive numbers, and the set of nonnegative real numbers respectively. Let $\mathbb{N}$ be the set of natural numbers including zero. By $L^2((0,1);\mathbb{R})$ we denote the equivalence class of Lebesgue measurable functions $f:[0,1]\rightarrow\mathbb{R}$ such that $\|f\|_{L^2((0,1);\mathbb{R})}=\left(\int_0^1|f(x)|^2dx\right)^{1/2}<\infty$. Let $\|f\|=\|f\|_{L^2((0,1);\mathbb{R})}$ and $\|f\|_\infty=\|f\|_{L^\infty((0,1);\mathbb{R})}$. $u[t]$ denotes the profile of $u$ at a certain $t\geq0$, \textit{i.e.}, $(u[t])(x)=u(x,t)$, for all $x\in[0,1]$. Define $\mathcal{C}^0(I;L^2((0,1);\mathbb{R}))$ as the space of continuous functions $u[t]$ for an interval $I\subseteq\mathbb{R}_{\geq 0}$ such that $I\ni t\rightarrow u[t]\in L^2((0,1);\mathbb{R})$. 

\section{Preliminaries and Problem statement}\label{sec:problem_statement}
We consider a $2\times2$ linear hyperbolic system in the canonical form. The independent variables $t\geq0$ and $x\in[0,1]$ denote the temporal and spatial domains, respectively. The distributed states are defined as $(u,v)^T$. The system is given by
\begin{align}
    &\partial_t u(x,t) = -\lambda_1(x) \partial_x u(x,t) + c_1(x) v(x,t),\label{sysu}\\
    &\partial_t v(x,t) = \lambda_2(x) \partial_x v(x,t) + c_2(x) u(x,t),\label{sysv}\\
    &u(0,t) =q v(0,t),\quad
    v(1,t) = \rho u(1,t) + U(t),\label{sysbc}
\end{align}
along with the available measurement $y(t)=v(0,t)$. Here, $\lambda_1(x),\lambda_2(x)\in \mathcal{C}^1((0,1);\mathbb{R}_{>0})$ are the transport velocities, $c_1(x),c_2(x)\in \mathcal{C}^0((0,1);\mathbb{R})$ are the spatially varying coupling coefficients, $q\neq0$ and $\rho\neq0$ are boundary parameters, and $U(t)$ is the control input to be determined. The initial conditions are assumed to be such that $(u^0,  v^0)^T \in  L^2((0, 1);\mathbb{R}^2)$. The boundary parameters are assumed to satisfy the following dissipative property.
\begin{assumption}\label{assum:rhoq}
We assume that the boundary parameters are small enough such that
\begin{equation}
    |\rho q|\leq 0.5.
\end{equation}
In addition, define
\begin{align}
    &\phi_1(x)=\int_0^x \tfrac{1}{\lambda_1(\xi)}d\xi,\quad \phi_2(x)=\int_0^x \tfrac{1}{\lambda_2(\xi)}d\xi,\label{phi}
\end{align}
We aim to determine the control input $U(t)$ applied in a zero-order hold fashion between events times. This increasing sequence of event times is denoted by  $I=\{t_j\}_{j\in\mathbb{N}},t_0=0$. The stabilizing control input $U(t)$ is obtained via emulation of a continuous-time stabilizing control signal as defined subsequently. 
\end{assumption}
\subsection{Continuous-Time Output Feedback Control and its Emulation}
Let an observer be designed with the states $(\hat u,\hat v)^T$, with initial conditions$(\hat u^0,  \hat v^0)^T \in  L^2((0, 1);\mathbb{R}^2)$. The observer error defined as  $(\tilde u,\tilde v)^T= (u,v)^T - (\hat u,\hat v)^T$ such that
\begin{align}
    &\partial_t \hat u(x,t) = -\lambda_1(x) \partial_x \hat u(x,t) + c_1(x) \hat v(x,t) + p_1(x)\tilde v(0,t),\label{obs-sysu}\\
    &\partial_t \hat v(x,t) = \lambda_2(x) \partial_x \hat v(x,t) + c_2(x) \hat u(x,t)+ p_2(x)\tilde v(0,t),\label{obs-sysv}\\
    &\hat u(0,t) = q v(0,t),\quad
    \hat v(1,t) = \rho \hat u(1,t) + U(t).\label{obs-sysbc}
\end{align}
Hence the error dynamics satisfy the equations
\begin{align}
    &\partial_t \tilde u(x,t) = -\lambda_1(x) \partial_x \tilde u(x,t) + c_1(x) \tilde v(x,t) - p_1(x)\tilde v(0,t),\label{err-sysu}\\
    &\partial_t \tilde v(x,t) = \lambda_2(x) \partial_x \tilde v(x,t) + c_2(x) \tilde u(x,t) - p_2(x)\tilde v(0,t),\label{err-sysv}\\
    &\tilde u(0,t) = 0,\quad
    \tilde v(1,t) = \rho \tilde u(1,t).\label{err-sysbc}
\end{align}
Consider the backstepping transformation
\begin{align}
    \tilde{u}(x,t) & =\tilde{\alpha}(x,t)-\int_0^x P^{\alpha\alpha}(x, \xi) \tilde{\alpha}(\xi,t) d\xi\nonumber\\&\quad-\int_0^x P^{\alpha\beta}(x, \xi) \tilde{\beta}(\xi,t) d\xi,\label{err-backstepping1}\\
\tilde{v}(x,t) & =\tilde{\beta}(x,t)-\int_0^x P^{\beta\alpha}(x, \xi) \tilde{\alpha}(\xi,t) d\xi\nonumber\\&\quad-\int_0^x P^{\beta\beta}(x, \xi) \tilde{\beta}(\xi,t) d\xi,\label{err-backstepping2}
\end{align}
and its inverse 
\begin{align}
\tilde{\alpha}(x,t)=&\tilde{u}(x,t)+\int_0^xR^{uu}(x,\xi)\tilde{u}(\xi,t)d\xi\nonumber\\&+\int_0^xR^{uv}(x,\xi)\tilde{v}(\xi,t)d\xi,\label{err-backstepping1-inverse}\\
\tilde{\beta}(x,t)=&\tilde{v}(x,t)+\int_0^xR^{vu}(x,\xi)\tilde{u}(\xi,t)d\xi\nonumber\\&+\int_0^xR^{vv}(x,\xi)\tilde{v}(\xi,t)d\xi,\label{err-backstepping2-inverse}
\end{align}
along with the output gains terms as
\begin{align}
p_1(x)=&-\lambda_2(0)P^{\alpha\beta}(x,0),\label{p1}\\
p_2(x)=&-\lambda_2(0)P^{\beta\beta}(x,0).\label{p2}
\end{align} 
to obtain the following target system:
\begin{align}
    &\partial_t \tilde{\alpha}(x,t) =-\lambda_1(x) \partial_x \tilde{\alpha}(x,t),\label{err-targetalpha} \\
    &\partial_t \tilde{\beta}(x,t)=\lambda_2(x) \partial_x \tilde{\beta}(x,t),\label{err-targetbeta}\\
    &\tilde{\alpha}(0,t)  =0, \quad
    \tilde{\beta}(1,t)  =\rho \tilde{\alpha}(1,t).\label{err-targetbc}
\end{align}
The backstepping transformation kernels in \eqref{err-backstepping1}-\eqref{err-backstepping2-inverse} defined over the triangular domain $0\leq\xi\leq x\leq1$ satisfy
\begin{align*}
&\Lambda(x)\partial_xP(x,\xi) + \partial_\xi(P(x,\xi)\Lambda(\xi))=C(x)P(x,\xi),\\
&P^{\alpha\alpha}(1,\xi)=\tfrac{1}{\rho}P^{\beta\alpha}(1,\xi),
~P^{\alpha\beta}(x,x)=-\tfrac{c_1(x)}{\lambda_1(x)+\lambda_2(x)},\\
&P^{\beta\beta}(1,\xi)=\rho P^{\alpha\beta}(1,\xi),
~P^{\beta\alpha}(x,x)=\tfrac{c_2(x)}{\lambda_1(x)+\lambda_2(x)}.\\
&\Lambda(x)\partial_xR(x,\xi) + \partial_\xi(R(x,\xi)\Lambda(\xi))=-R(x,\xi)C(x),\\
&R^{uu}(1,\xi)=\tfrac{1}{\rho}R^{vu},
~R^{uv}(x,x)=-\tfrac{c_1(x)}{\lambda_1(x)+\lambda_2(x)},\\
&R^{vv}(1,\xi)=\rho P^{uv}(1,\xi),
~R^{vu}(x,x)=\tfrac{c_2(x)}{\lambda_1(x)+\lambda_2(x)},
\end{align*}
where,
{\footnotesize
\begin{align*}
    &\Lambda(x) = \begin{bmatrix}\begin{smallmatrix}\lambda_1(x)&0\\0&-\lambda_2(x)\end{smallmatrix}\end{bmatrix},\quad C(x) = \begin{bmatrix}\begin{smallmatrix}0&c_1(x)\\c_2(x)&0\end{smallmatrix}\end{bmatrix},\\
    &P(x,\xi) = \begin{bmatrix}\begin{smallmatrix}P^{\alpha\alpha}(x,\xi)&P^{\alpha\beta}(x,\xi)\\P^{\beta\alpha}(x,\xi)&P^{\beta\beta}(x,\xi)\end{smallmatrix}\end{bmatrix},~R(x,\xi) = \begin{bmatrix}\begin{smallmatrix}R^{uu}(x,\xi)&R^{uv}(x,\xi)\\R^{vu}(x,\xi)&R^{vv}(x,\xi)\end{smallmatrix}\end{bmatrix}.
\end{align*}}
In contrast to \cite{vazquez2011_2x2hyperbolic}, the kernel equations above are derived under an anti-collocated sensing and actuation framework, leading to a distinct formulation. The control input is determined by sampling a continuous-time signal $U^c(t)$ at times $t_j\in I, j\in\mathbb{N}$, that is we have
\begin{equation}
    U(t) \coloneqq U^c(t_j), t\in[t_j,t_{j+1}), j\in\mathbb{N}.\label{U}
\end{equation}
Let the input holding error be defined as
\begin{align}
d(t) = U(t) - U^c(t).\label{d}
\end{align}
Subsequently, consider the following backstepping transformation
\begin{align}
    \hat{\alpha}(x,t) =&\hat{u}(x,t)-\int_0^xK^{uu}(x,\xi)\hat{u}(\xi,t)d\xi \nonumber\\&-\int_0^xK^{uv}(x,\xi)\hat{v}(\xi,t)d\xi,\label{obs-backstepping1}\\
\hat{\beta}(x,t)  =&\hat{v}(x,t)-\int_0^x K^{vu}(x, \xi) \hat{u}(\xi,t) d\xi\nonumber\\&-\int_0^x K^{vv}(x, \xi) \hat{v}(\xi,t) d\xi,\label{obs-backstepping2} 
\end{align}
and its inverse
\begin{align}
    \hat{u}(x,t)  =&\hat{\alpha}(x,t)+\int_0^x L^{\alpha\alpha}(x, \xi) \hat{\alpha}(\xi,t) d\xi \nonumber\\&+ \int_0^x L^{\alpha\beta}(x,\xi) \hat{\beta}(\xi,t) d\xi, \label{obs-ackstepping1-inverse}\\
    \hat{v}(x,t)  =&\hat{\beta}(x,t)+\int_0^x L^{\beta\alpha}(x, \xi) \hat{\alpha}(\xi,t) d\xi \nonumber\\&+ \int_0^x L^{\beta\beta}(x,\xi) \hat{\beta}(\xi,t) d\xi, \label{obs-backstepping2-inverse}
\end{align}
along with $U^c(t)$ defined as
\begin{equation}\label{cntns-fb}
    U^c(t)=\int_0^1 N^\alpha(\xi) \hat{\alpha}(\xi,t)d\xi+\int_0^1 N^\beta(\xi) \hat{\beta}(\xi,t)d\xi,
\end{equation}
where
\begin{align}
    N^\alpha(\xi)=&L^{\beta\alpha}(1,\xi)-\rho L^{\alpha\alpha}(1,\xi),\label{n-alpha}\\
    N^\beta(\xi)=&L^{\beta\beta}(1,\xi)-\rho L^{\alpha\beta}(1,\xi).\label{n-beta}
\end{align}
Which results in the following target system
\begin{align}
&\partial_t \hat{\alpha}(x,t) =-\lambda_1(x) \partial_x \hat{\alpha}(x,t) + \bar{p}_1(x)\tilde{\beta}(0,t),\label{obs-targetu} \\
&\partial_t \hat{\beta}(x,t)=\lambda_2(x) \partial_x \hat{\beta}(x,t) + \bar{p}_2(x)\tilde{\beta}(0,t),\label{obs-targetv}\\   
&\hat{\alpha}(0,t)=q\hat{\beta}(0,t) + q\tilde{\beta}(0,t),\quad
\hat{\beta}(1,t)=\rho\hat{\alpha}(1,t) + d(t),\label{obs-targetbc}
\end{align}
where
\begin{align}
    \bar{p}_1(x)=&p_1(x)-\int_0^xK^{uu}(x,\xi)p_1(\xi)d\xi\nonumber\\&-\int_0^xK^{uv}(x,\xi)p_2(\xi)d\xi,\label{p1_bar}\\
    \bar{p}_2(x)=&p_2(x)-\int_0^xK^{vu}(x,\xi)p_1(\xi)d\xi\nonumber\\&-\int_0^xK^{vv}(x,\xi)p_2(\xi)d\xi.\label{p2_bar}
\end{align}
We refer the reader to \cite{vazquez2011_2x2hyperbolic} for the definition of the backstepping transformation kernels in \eqref{obs-backstepping1}-\eqref{obs-backstepping2-inverse} over the triangular domain $0\leq\xi\leq x\leq1$. The following proposition addresses the well-posedness issues under the proposed control input.
\begin{proposition}\label{propo:wellposedness}
    Let $j\in\mathbb{N}$, and $U(t)\in\mathbb{R}$ be constant for $t\in[t_j,t_{j+1})$. For a given $(u[t_j], v[t_j])^T \in L^2((0,1);\mathbb{R}^2)$ and $(\hat u[t_j], \hat v[t_j])^T \in L^2((0,1);\mathbb{R}^2)$, there exist unique solutions such that $(u, v)^T \in \mathcal{C}^0([t_j, t_{j+1}];  L^2((0,1);\mathbb{R}^2))$ and $(\hat 
    {u}, \hat{v})^T \in \mathcal{C}^0([t_j,t_{j+1}]; L^2((0,1);\mathbb{R}^2))$ to the systems \eqref{sysu}-\eqref{sysbc} and \eqref{obs-sysu}-\eqref{obs-sysbc} respectively. 
\end{proposition}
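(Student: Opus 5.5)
The plan is to reduce the coupled plant--observer pair on $[t_j,t_{j+1}]$ to a cascade of well-posed linear hyperbolic systems, using the invertible backstepping transformations introduced above, and then to invoke standard existence results for linear hyperbolic systems with nonlocal and boundary couplings (e.g.\ the semigroup/characteristics results in Bastin--Coron type frameworks, as used in \cite{vazquez2011_2x2hyperbolic,espitia2020observer}). Since $U(t)\equiv U^c(t_j)$ is a constant on the interval, the problem is an inhomogeneous boundary value problem with constant boundary datum, so no feedback through $U$ appears within the interval. This is the key simplification.

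\textbf{Step 1 (plant).} I will treat \eqref{sysu}--\eqref{sysbc} with $U$ constant. I lift the boundary datum by writing $v = w + \psi(x)U$ with a smooth $\psi$ satisfying $\psi(1)=1$ and $\psi(0)=0$. Then $(u,w)$ solves a system with homogeneous boundary conditions $u(0)=qw(0)$, $w(1)=\rho u(1)$ and a bounded source term $(c_1\psi U,\ \lambda_2\psi' U + c_2\cdot 0)$ that is constant in time. The operator $\mathcal{A}(u,w)=(-\lambda_1u_x+c_1w,\ \lambda_2w_x+c_2u)$ on this domain is the sum of a transport operator, which generates a $C_0$-semigroup on $L^2((0,1);\mathbb{R}^2)$ (solvable explicitly by characteristics with reflections via $q,\rho$), and a bounded perturbation. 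Hence $\mathcal{A}$ generates a $C_0$-semigroup. The mild solution formula then gives a unique $(u,v)\in\mathcal{C}^0([t_j,t_{j+1}];L^2)$.

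\textbf{Step 2 (error and observer).} Since $v(0,t)$ is a point trace, I will avoid working with the observer directly. Instead I use the error system \eqref{err-sysu}--\eqref{err-sysbc}, which is mapped by the bounded invertible Volterra transformation \eqref{err-backstepping1}--\eqref{err-backstepping2-inverse} to the target system \eqref{err-targetalpha}--\eqref{err-targetbc}. The target system is a cascade of pure transports: $\tilde\alpha$ is driven from $\tilde\alpha(0)=0$, and $\tilde\beta$ is driven from $\rho\tilde\alpha(1)$. It is solved explicitly by characteristics using $\phi_1,\phi_2$ from \eqref{phi}, and lies in $\mathcal{C}^0(L^2)$. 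In addition, its trace $\tilde\beta(0,\cdot)=\tilde v(0,\cdot)$ belongs to $L^2_{loc}$ in time, which is the hidden regularity given by the explicit formula. Pulling back through the bounded kernels gives a unique $(\tilde u,\tilde v)$, and then $(\hat u,\hat v)=(u,v)-(\tilde u,\tilde v)$. Uniqueness of $(\hat u,\hat v)$ follows because the difference of two observer solutions solves the homogeneous version of \eqref{obs-sysu}--\eqref{obs-sysbc}. In that homogeneous problem the boundary term $qv(0,t)$ cancels but the injection $\tilde v(0,t)$ does not. I will therefore argue uniqueness at the level of the pair (plant, error), where both are already unique.

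\textbf{Main obstacle.} The delicate point is the boundary trace $v(0,t)$ (equivalently $\tilde v(0,t)$) for mere $L^2$ data. This trace appears both in the observer injection $p_i\tilde v(0,t)$ and in the boundary condition $\hat u(0,t)=qv(0,t)$, and it is not defined pointwise for $L^2$ states. I will handle it through the admissibility of the observation operator. For the plant, I use a characteristic/energy estimate of the form $\int_{t_j}^{t_{j+1}}|v(0,s)|^2ds\le C(\|u[t_j]\|^2+\|v[t_j]\|^2+U^2)$, proved first for smooth compatible data and extended by density. This shows that the inputs $v(0,\cdot)$ and $\tilde v(0,\cdot)$ to the observer are in $L^2(t_j,t_{j+1})$. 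Then \eqref{obs-sysu}--\eqref{obs-sysbc} is a linear hyperbolic system with $L^2$-in-time boundary and distributed inputs, which is well-posed in $\mathcal{C}^0([t_j,t_{j+1}];L^2)$ by admissibility of the boundary control operator for transport equations. This gives continuity of $(\hat u,\hat v)$ directly.
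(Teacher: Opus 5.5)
Your argument is correct, but it takes a different route from the paper. The paper's proof is one sentence: it solves the plant and the observer separately along characteristics on $[t_j,t_{j+1}]$, invoking the broad-solution theory of \cite[Appendix A]{coron2021_time_varying_coefficients}. It never uses the backstepping kernels and does not discuss the trace terms.

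You instead exploit the cascade structure that a constant $U$ creates on the interval:
\begin{itemize}
\item the plant is solved on its own, by a lifting plus a bounded perturbation of a transport semigroup;
\item the error system is conjugated by the Volterra transformation \eqref{err-backstepping1}--\eqref{err-backstepping2-inverse} to the explicitly solvable cascade \eqref{err-targetalpha}--\eqref{err-targetbc};
\item the observer is recovered as $(u,v)-(\tilde u,\tilde v)$.
\end{itemize}

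\textbf{What your route buys.} It treats explicitly the point the paper skips: for $L^2$ data, $v(0,\cdot)$ and $\tilde v(0,\cdot)=\tilde\beta(0,\cdot)$ are only $L^2$ in time. It also shows that the observer's self-referential injection through $\hat v(0,t)$ causes no difficulty. Your uniqueness remark can be stated more cleanly. The difference of two observer solutions satisfies exactly \eqref{err-sysu}--\eqref{err-sysbc} with zero initial data, so it vanishes by Step 2.

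For the same reason, the direct observer construction in your last paragraph is redundant. Treating $\tilde v(0,\cdot)$ as an exogenous input would require a consistency check that $v-\hat v$ reproduces it. Defining $(\hat u,\hat v)$ by subtraction makes that check, and the $\mathcal{C}^0([t_j,t_{j+1}];L^2)$ regularity, automatic.

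\textbf{What it costs.} Your well-posedness proof now depends on the control design. You need the anti-collocated error kernels $P,R$ to exist and be $C^1$, which the paper asserts but does not prove. That regularity is what makes the transformation map the domain of the error generator onto that of the target generator, so that uniqueness of mild solutions transfers. The characteristics approach avoids this dependence entirely. You could also avoid it within your own framework: $-p_i(x)\tilde v(0,t)$ is a bounded-operator-times-admissible-observation perturbation of the transport generator, so a Miyadera--Voigt type argument handles the error system directly.
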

\begin{proof}
    Consider the systems \eqref{sysu}-\eqref{sysbc}, and \eqref{obs-sysu}-\eqref{obs-sysbc} for time $t\in[t_j,t_{j+1}]$ for a fixed $j\in\mathbb{N}$ with $U(t)\in\mathbb{R}$ constant for $t\in[t_j,t_{j+1})$ and $(u[t_j], v[t_j])^T \in L^2((0,1);\mathbb{R}^2)$, $(\hat u[t_j], \hat v[t_j])^T \in L^2((0,1);\mathbb{R}^2)$. Then, considering the solution along the characteristics, and following \cite[Appendix A]{coron2021_time_varying_coefficients}, it can be shown that there exists unique solutions $(u, v)^T \in \mathcal{C}^0([t_j, t_{j+1}];  L^2((0,1);\mathbb{R}^2))$ and $(\hat 
    {u}, \hat{v})^T \in \mathcal{C}^0([t_j,t_{j+1}]; L^2((0,1);\mathbb{R}^2))$ to the systems \eqref{sysu}-\eqref{sysbc} and \eqref{obs-sysu}-\eqref{obs-sysbc} respectively.
\end{proof}

The following lemma holds for the input holding error $d(t)$.
\begin{lemma}
    The input holding error $d(t)$ defined in \eqref{d} satisfy the following inequality:
    \begin{equation}
    \begin{aligned}
        \dot d(t) ^2 \leq& \varepsilon_0 d(t) + \varepsilon_1 \|\hat \alpha[t]\|^2 + \varepsilon_2 \|\hat \beta[t]\|^2\\&  + \varepsilon_3 \hat\alpha(1,t)^2+ \varepsilon_4 \tilde \beta(0,t)^2,\label{dotd2}
        \end{aligned}
    \end{equation}
    where
    \begin{align}
        \varepsilon_0 =& 5 (\lambda_2(1) N^\beta(1))^2,\\
        \varepsilon_1 =& 5 \int_0^1 (\partial_\xi\lambda_1(\xi)N^\alpha(\xi)+\lambda_1(\xi)\partial_\xi N^\alpha(\xi))^2d\xi,\\
        \varepsilon_2 =& 5 \int_0^1 (\partial_\xi\lambda_2(\xi)N^\beta(\xi)+\lambda_2(\xi)\partial_\xi N^\beta(\xi))^2d\xi,\\
        \varepsilon_3 =& 5 (\lambda_1(1) N^\alpha(1) - \rho\lambda_2(1)N^\beta(1))^2,\\
        \varepsilon_4 =& 5 \Big(\int_0^1( N^\alpha(\xi)\bar p_1(\xi) + N^\beta(\xi)\bar p_2(\xi))d\xi\nonumber\\
        &\mkern200mu+\lambda_1(0)qN^\alpha(0)\Big)^2.
    \end{align}
\end{lemma}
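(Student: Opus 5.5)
The plan is to compute $\dot d(t)$ explicitly on each open interval $(t_j,t_{j+1})$ and then bound its square by Young's and Cauchy–Schwarz inequalities. On such an interval $U(t)=U^c(t_j)$ is constant by \eqref{U}, so $\dot d(t)=-\dot U^c(t)$. It therefore suffices to differentiate \eqref{cntns-fb} along solutions of the target system \eqref{obs-targetu}--\eqref{obs-targetbc}. This is done first formally for smooth solutions, and then extended by density using Proposition~\ref{propo:wellposedness}.

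\textbf{Step 1: differentiate and integrate by parts.} Substituting \eqref{obs-targetu}--\eqref{obs-targetv} into the time derivative of \eqref{cntns-fb} gives
\begin{align*}
\dot U^c(t)=&-\int_0^1 N^\alpha\lambda_1\partial_\xi\hat\alpha\,d\xi+\int_0^1 N^\beta\lambda_2\partial_\xi\hat\beta\,d\xi\\
&+\tilde\beta(0,t)\int_0^1\big(N^\alpha\bar p_1+N^\beta\bar p_2\big)d\xi .
\end{align*}
Integrating by parts moves the spatial derivative onto $(\lambda_1N^\alpha)'=\partial_\xi\lambda_1N^\alpha+\lambda_1\partial_\xi N^\alpha$ and onto $(\lambda_2N^\beta)'$. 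This produces the two interior integrals against $\hat\alpha$ and $\hat\beta$, plus the boundary terms
\begin{align*}
&-\lambda_1(1)N^\alpha(1)\hat\alpha(1,t)+\lambda_1(0)N^\alpha(0)\hat\alpha(0,t)\\
&+\lambda_2(1)N^\beta(1)\hat\beta(1,t)-\lambda_2(0)N^\beta(0)\hat\beta(0,t).
\end{align*}

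\textbf{Step 2: eliminate the boundary values via \eqref{obs-targetbc}.} Insert $\hat\beta(1,t)=\rho\hat\alpha(1,t)+d(t)$ and $\hat\alpha(0,t)=q\hat\beta(0,t)+q\tilde\beta(0,t)$. The resulting terms are:
\begin{itemize}
\item a term in $d(t)$ with coefficient $\lambda_2(1)N^\beta(1)$;
\item a term in $\hat\alpha(1,t)$ with coefficient $\rho\lambda_2(1)N^\beta(1)-\lambda_1(1)N^\alpha(1)$, whose square matches $\varepsilon_3/5$;
\item a term $\lambda_1(0)qN^\alpha(0)\tilde\beta(0,t)$, which merges with the $\bar p$ integral to give the coefficient in $\varepsilon_4$;
\item a leftover term $\big(q\lambda_1(0)N^\alpha(0)-\lambda_2(0)N^\beta(0)\big)\hat\beta(0,t)$.
\end{itemize}

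\textbf{Step 3 (main obstacle): show the $\hat\beta(0,t)$ term vanishes.} No $\hat\beta(0,t)$ term appears in \eqref{dotd2}, so its coefficient must be zero, i.e.
\[
\lambda_2(0)N^\beta(0)=q\lambda_1(0)N^\alpha(0).
\]
I would verify this from \eqref{n-alpha}--\eqref{n-beta}, evaluated at $\xi=0$, together with the boundary conditions at $\xi=0$ of the inverse kernels $L$ from \cite{vazquez2011_2x2hyperbolic}. In the $u(0)=qv(0)$ setting these conditions read $\lambda_2(0)L^{\cdot\beta}(x,0)=q\lambda_1(0)L^{\cdot\alpha}(x,0)$; evaluating them at $x=1$ and taking the combination in $N^\alpha,N^\beta$ gives the identity. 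If the sign convention in those conditions differs, this cancellation is exactly where the calculation must be adjusted. It is the one step not guaranteed by generic manipulations.

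\textbf{Step 4: square and bound.} What remains is $\dot d(t)$ written as a sum of five terms. Apply $(a_1+\dots+a_5)^2\le 5\sum_i a_i^2$, and use Cauchy–Schwarz on the two interior integrals, e.g.
\[
\Big(\int_0^1(\lambda_1N^\alpha)'\hat\alpha\,d\xi\Big)^2\le\int_0^1\big((\lambda_1N^\alpha)'\big)^2d\xi\,\|\hat\alpha[t]\|^2 .
\]
This yields the constants $\varepsilon_1,\dots,\varepsilon_4$ exactly. The $d$ term produces $5(\lambda_2(1)N^\beta(1))^2d(t)^2$, so the first term of \eqref{dotd2} should be read as $\varepsilon_0 d(t)^2$; I would state it that way. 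Finally, since $d$ is right-continuous with jumps only at the $t_j$, the bound holds for all $t\in(t_j,t_{j+1})$ and every $j$.
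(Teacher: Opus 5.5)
Your proposal is correct and follows the same route as the argument the paper defers to (\cite[Lemma 2]{espitia2020observer}): you differentiate $U^c$ along the target system, integrate by parts, substitute the boundary conditions \eqref{obs-targetbc}, and bound the five resulting terms using $(a_1+\dots+a_5)^2\le 5\sum a_i^2$ together with Cauchy--Schwarz. Your Step~3 goes through as written, and you are right that the first term should read $\varepsilon_0 d(t)^2$: the $\xi=0$ boundary conditions of the inverse kernels, which come from mapping $\alpha(0)=q\beta(0)$ to $u(0)=qv(0)$, are $\lambda_2(0)L^{\alpha\beta}(x,0)=q\lambda_1(0)L^{\alpha\alpha}(x,0)$ and $\lambda_2(0)L^{\beta\beta}(x,0)=q\lambda_1(0)L^{\beta\alpha}(x,0)$, and at $x=1$ they give exactly $\lambda_2(0)N^\beta(0)=q\lambda_1(0)N^\alpha(0)$.
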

The proof is similar to \cite[Lemma 2]{espitia2020observer} and hence omitted.

\section{Performance-Barrier Event-Triggered Control}\label{sec:p_etc}
In this section, we present the novel observer-based P-ETC strategy and the stability of the closed-loop system. A performance residual term based only on observer states is incorporated with the ETC mechanism in \cite{bhathiya2026_etc_stability} to achieve increased dwell-times for a control input applied in ZOH between events. Therefore, unlike \cite{ong2024performance,rathnayake2025performance,zhang2025performance}, full state information is not required for P-ETC design presented here. The proposed P-ETC mechanism is illustrated in Fig.~\ref{fig:triggering-mechanism}.
\begin{figure}[t]
    \centering
    \begin{tikzpicture}
    \def\concolor{BrickRed}
    \def\etcolor{Purple}
    \def\plantcolor{RoyalBlue}
    \def\obscolor{Aquamarine}
    \def\ucolor{YellowOrange}

    \node (plant) [rectangle, draw = \plantcolor, text = \plantcolor, minimum width = 2cm, minimum height = 0.5 cm] {Plant};
    \node (y0) [left = 75pt of plant.west] {};
    \node (y1) [below = 20pt of y0] {};
    \node (y2) [below = 10pt of y1] {};
    \node (U0) [right = 75pt of plant.east] {};
    \node (U1) [below = 20pt of U0] {};
    \node (U2) [below = 10pt of U1] {};
    \node (observer) [rectangle, draw = \obscolor, text = \obscolor, minimum width = 1cm, minimum height = 0.5 cm] at ($ (y2)!0.15!(U2) $) {Observer};
    \node (observer2) [right= 32pt of observer.east] {};
    \draw[\obscolor, line width =1pt] (observer.east) to node[pos=0.5, above]{$\hat{u}[t],\hat{v}[t]$} (observer2.center);
    \node (controller) [rectangle, draw = \concolor, text = \concolor, minimum width = 2cm, minimum height = 0.5 cm] at ($ (y1)!0.63!(U1) $) {Controller};
    \node (petc) [rectangle, draw = \etcolor, text = \etcolor, minimum width = 2cm, text width = 2cm, text centered, minimum height = 0.5 cm, below=20pt of controller] {Triggering mechanism};

    \node (s1c) [circle, draw, fill=black, minimum size = 3pt, inner sep=0pt, right = 30pt of controller.east] {};
    \node (s1u) [circle, draw, fill=black, minimum size = 3pt, inner sep=0pt, right = 10pt of s1c.center] {};
    \node (s1etc) [below right = 0pt and 0pt of s1c] {};
    \draw[-,line width = 1.5pt] (s1c.center) to ++(315:10pt);

    \draw[\concolor, -stealth, line width = 1pt] (controller.south) to node[pos=0.5, left]{$U^c(t)$} (petc.north);
    \draw[\plantcolor, -stealth, line width = 1pt] (plant.west) to node[pos=0.15, above]{$y(t)$} (y0.center) to (y2.center) to (observer.west);
    \draw[\obscolor, -stealth, line width =1pt] (observer2.center) to (observer2|-petc.west) to (petc.west);
    \draw[\obscolor, -stealth, line width =1pt] (observer2.center) to (observer2|-controller.west) to (controller.west);
    \draw[\concolor, line width =1pt] (controller.east) to node[pos=0.5, above]{$U^c(t)$} (s1c.west);
    \draw[\ucolor,-stealth, line width =1pt] (s1u.east) to  (U1.center) to (U0.center) to node[pos=0.8, above]{$U(t)$} (plant.east);
    \draw[\etcolor,dashed,-stealth, line width = 1pt] (petc.east) to node[pos=0.5, below]{$\{t_j\}_{j\in\mathbb{N}}$} (petc.east-|s1etc) to  (s1etc);
\end{tikzpicture}
    \caption{Schematic of the proposed P-ETC mechanism}
    \label{fig:triggering-mechanism}
\end{figure}
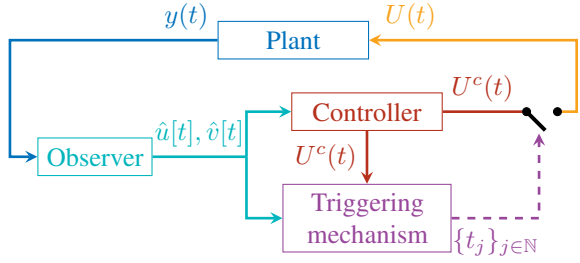
The increasing sequence of times at which events occur is defined according to the definition given below.
\begin{definition}\label{def:etc}
The increasing sequence of times $I$ at which events are generated is defined according to the following rule:
\begin{equation}
    t_{j+1}\coloneqq \inf\{t\geq t_j + \tau\}\vert m(t)<0, j\in\mathbb{N}\}.\label{tj+1}
\end{equation}
    The dynamic variable $m(t)$ is defined as
    \begin{equation}
    \begin{aligned}
        \dot m(t) =& -\eta m(t) + \kappa_1\|\hat \alpha[t]\|^2 + \kappa_2 \|\hat \beta[t]\|^2\\&  + \kappa_3 \hat\alpha(1,t)^2+ \kappa_4 \tilde \beta(0,t)^2 + cW(t),\label{m1}
    \end{aligned}
    \end{equation}
    for $t\in(t_j,t_j+\tau),j\in\mathbb{N}$ and
    \begin{equation}
    \begin{aligned}
        \dot m(t) =& -\eta m(t) -\theta d(t)^2 + \kappa_1 \|\hat \alpha[t]\|^2 + \kappa_2 \|\hat \beta[t]\|^2\\&  + \kappa_3 \hat\alpha(1,t)^2+ \kappa_4 \tilde \beta(0,t)^2 + cW(t),\label{m2}
    \end{aligned}
    \end{equation}
    for $t\in(t_j+\tau,t_{j+1}),j\in\mathbb{N}$. In addition, define $m(0)=0$, $m(t_j)=\omega_0d(t_j^-)^2,j\in\mathbb{N}_{>0}$, and $m(t_j+\tau)=m((t_j+\tau)^-),j\in\mathbb{N}$. The minimum dwell-time $\tau$ is defined such that
    \begin{equation}\label{f}
    \dot f(t) =
        \begin{cases}
            -a_2f(t)^2 - a_1f(t) - a_0&t\in(t_j,t_j+\tau)\\
            0&t\in(t_j+\tau,t_{j+1})
        \end{cases},
    \end{equation}
    with $f(t_j)=\omega_1$, $f((t_j+\tau)^-)=f(t_j+\tau)=\omega_0$\, and 
    \begin{align}
        \tau = \int_{\omega_0}^{\omega_1}\frac{1}{a_2s^2 + a_1s + a_0}ds.\label{tau}
    \end{align}The estimated performance residual, $W(t)$ is defined as
    \begin{equation}
        W(t) = \mathrm{e}^{-\gamma t}\varrho(t) - \hat V(t),\label{w}
    \end{equation}
    where
    \begin{align}
        \varrho(t) =& \max_{s\in[0,t]}\left\{\mathrm{e}^{\gamma s}\hat V(s)\right\},\label{varrho}\\
        V_1(t) =& \int_0^1 \bigg(\frac{A}{\lambda_1(x)}\mathrm{e}^{-\mu\phi_1(x)}\hat \alpha(x,t)^2\notag\\
        &+\frac{B}{\lambda_2(x)}\mathrm{e}^{\mu\phi_2(x)}\hat \beta(x,t)^2\bigg)dx,\label{v1}\\
        \hat V(t) =& V_1(t) + f(t)d(t)^2 + m(t).\label{hat-v}
    \end{align}
    The values $\eta$, $\kappa_1$, $\kappa_2$, $\kappa_3$, $\kappa_4$, $c$, $\omega_1>\omega_0$, and $\gamma\leq \nu$ are positive design parameters. The constants $A$,$B$,$\mu,\theta$, and $\nu$ are to be specified subsequently. 
    \end{definition}
    
    Note that, $\hat V(t)\in \mathcal{C}^0(\mathbb{R}_{\geq0};\mathbb{R}_{\geq0})$ given that $(\hat \alpha,\hat \beta)^T\in \mathcal{C}^0(\mathbb{R}_{\geq0};L^2((0,1);\mathbb{R}^2))$. Hence, $\varrho(t), W(t)\in \mathcal{C}^0(\mathbb{R}_{\geq0};\mathbb{R}_{\geq0})$. In addition, by \eqref{varrho}, $\varrho(t)$ is a non-decreasing function of time $t$. The following lemma holds for the dynamic variable $m(t)$, and the estimated performance residual $W(t)$ in Definition~\ref{def:etc}.
\begin{lemma}
    Consider an increasing sequence of event times $I=\{t_j\}_{j\in\mathbb{N}}$, $t_0=0$, satisfying $\lim_{j\rightarrow\infty}t_j=+\infty$ determined by the triggering rule \eqref{tj+1}-\eqref{hat-v}. Then the estimated performance residual $W(t)$, satisfies $W(t)\geq0 \,\forall t\geq0$. Subsequently, the dynamic variable $m(t)$ governed by \eqref{m1},\eqref{m2} with $m(0)=0$, $m(t_j)\geq 0,j\in\mathbb{N}_{>0}$, $m((t_j+\tau)^-)=m(t_j+\tau),j\in\mathbb{N}$, where $\tau$ is the MDT, it holds that $m(t)\geq0~\forall t\geq0$.
\end{lemma}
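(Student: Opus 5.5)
The plan has two steps: show $W(t)\geq0$ directly from the definition of $\varrho$, and then show $m(t)\geq0$ by working interval by interval over $[t_j,t_{j+1})$.

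For the residual, fix $t\geq0$. By \eqref{varrho}, $\varrho(t)=\max_{s\in[0,t]}\{\mathrm{e}^{\gamma s}\hat V(s)\}\geq \mathrm{e}^{\gamma t}\hat V(t)$, since $s=t$ lies in the admissible set. The maximum exists because $\hat V$ is continuous, which was noted after Definition~\ref{def:etc}. Multiplying by $\mathrm{e}^{-\gamma t}>0$ and using \eqref{w} gives $W(t)=\mathrm{e}^{-\gamma t}\varrho(t)-\hat V(t)\geq0$. This step needs no sign information on $m$, so there is no circularity with the second part.

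For $m$, fix $j\in\mathbb{N}$. At the left endpoint, $m(t_j)\geq0$: for $j=0$ because $m(0)=0$, and for $j>0$ because $m(t_j)=\omega_0 d(t_j^-)^2\geq0$.

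On $(t_j,t_j+\tau)$, $m$ obeys \eqref{m1}, i.e. $\dot m=-\eta m+g(t)$ with
\begin{equation*}
g(t)=\kappa_1\|\hat\alpha[t]\|^2+\kappa_2\|\hat\beta[t]\|^2+\kappa_3\hat\alpha(1,t)^2+\kappa_4\tilde\beta(0,t)^2+cW(t).
\end{equation*}
All $\kappa_i$ and $c$ are positive and $W\geq0$ by the first part, so $g\geq0$. The variation of constants formula then gives
\begin{equation*}
m(t)=\mathrm{e}^{-\eta(t-t_j)}m(t_j)+\int_{t_j}^t \mathrm{e}^{-\eta(t-s)}g(s)\,ds\geq0,
\end{equation*}
and by the continuity convention $m(t_j+\tau)=m((t_j+\tau)^-)\geq0$.

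On $[t_j+\tau,t_{j+1})$, $m$ obeys \eqref{m2}, which contains the extra term $-\theta d^2$, so the comparison argument does not apply. Here I would use the triggering rule \eqref{tj+1} instead. By definition of the infimum, $t_{j+1}$ is the first time after $t_j+\tau$ at which $m<0$, so $m(t)\geq0$ for every $t\in[t_j+\tau,t_{j+1})$. If the set in \eqref{tj+1} were empty, the interval would be unbounded and $m\geq0$ on it by the same reasoning. Since $t_j\to\infty$ by hypothesis, the intervals $[t_j,t_{j+1})$ cover $\mathbb{R}_{\geq0}$, and so $m(t)\geq0$ for all $t\geq0$.

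The main obstacle is the regularity of $m$ needed for the infimum argument. One has to make sure the source terms $\hat\alpha(1,t)^2$ and $\tilde\beta(0,t)^2$, which are boundary traces, are locally integrable in $t$, so that $m$ is absolutely continuous on each interval and the variation of constants formula holds. This follows from the transport structure of the target systems \eqref{err-targetalpha}--\eqref{err-targetbc} and \eqref{obs-targetu}--\eqref{obs-targetbc}, where boundary traces are $L^2_{\mathrm{loc}}$ in time, together with Proposition~\ref{propo:wellposedness}. Given this continuity, $m(t_{j+1}^-)\geq0$, and the reset value $m(t_{j+1})=\omega_0 d(t_{j+1}^-)^2\geq0$ restarts the argument on the next interval.
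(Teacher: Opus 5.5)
Your proposal is correct and follows the paper's proof step for step. You get $W\geq0$ because $s=t$ is admissible in the maximum defining $\varrho$. You then use variation of constants with a nonnegative source on $(t_j,t_j+\tau]$, and the infimum in the triggering rule on $(t_j+\tau,t_{j+1})$, iterating from $t_0=0$. Your remarks on the absence of circularity and on local integrability of the boundary traces (which makes $m$ absolutely continuous) fill in details the paper leaves implicit; they do not change the route.
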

\begin{proof}
    Under Definition~\ref{def:etc}, $I$ is an increasing sequence due to the existence of a MDT $\tau$. Therefore, from \eqref{varrho} and \eqref{w}, we have
    \begin{align*}
        W(t) = \mathrm{e}^{-\gamma t}\max_{s\in[0,t]}\left\{\mathrm{e}^{\gamma s}\hat V(s)\right\} - \hat V(t)\geq0,~\forall t\geq0.
    \end{align*}
    Let $m(0)=0$, $m(t_j)\geq 0,j\in\mathbb{N}_{>0}$, $m((t_j+\tau)^-)=m(t_j+\tau),j\in\mathbb{N}$, then from \eqref{m1} we have
    \begin{equation}
    \begin{aligned}
        m(t) = \mathrm{e}^{-\eta(t-t_j)}m(t_j) + \int_{t_j}^t \mathrm{e}^{-\eta(t -\xi)}\big(\kappa_1\|\hat\alpha[\xi]\|^2\\
        \kappa_2 \|\hat \beta[\xi]\|^2  + \kappa_3 \hat\alpha(1,\xi)^2+ \kappa_4 \tilde \beta(0,\xi)^2 + cW(t)\big)d\xi,
    \end{aligned}
    \end{equation}
    $\forall t\in(t_j,t_j+\tau], j\in\mathbb{N}$. Hence $m(t)\geq0~\forall t\in[t_j,t_j+\tau], j\in\mathbb{N}$ Subsequently, for $t\in(t_j+\tau,t_{j+1}), j\in\mathbb{N}$, $m(t)$ evolves as in \eqref{m2} with $t_{j+1}$ selected according to \eqref{tj+1} to ensure $m(t)\geq0,\forall t\in(t_j+\tau,t_{j+1}),j\in\mathbb{N}$. Applying the aforementioned reasoning for each $t_j\in I$ starting with $t_0=0$, it can be shown that $m(t)\geq0, \forall t\geq0$.
\end{proof}

Under Definition~\ref{def:etc}, the evolution of the dynamic variable $m(t)$ associated with the triggering condition \eqref{tj+1} is shown in Fig~\ref{fig:etc}. Where an event time $t_j,j\in\mathbb{N}$ occurs when $m(t)<0$, with $m(0)=0$, $m(t_j)=\omega_0d(t_j^-)^2,j\in\mathbb{N}_{>0}$, and $m(t_j+\tau)=m((t_j+\tau)^-),j\in\mathbb{N}$. Compared to the triggering condition in \cite{bhathiya2026_etc_stability}, a positive estimated performance residual term $W(t)$ is introduced into the dynamics of $m(t)$ allowing $m(t)$ to stay positive longer, resulting in longer dwell-times. 
    \begin{figure}[t]
    \centering
    \begin{tikzpicture}
\node (states) [align=left] {$\kappa_1\|\hat\alpha[t]\|^2$\\ $+\kappa_2\|\hat\beta[t]\|^2$\\$+\kappa_3\hat\alpha(1,t)$\\$+\kappa_4\tilde\beta(0,t)$};
\node (add) [circle,draw,minimum size = 3pt,inner sep=0pt,right= 0.1\columnwidth of states] {$+$};
\node (int) [draw,right= 0.2\columnwidth of add] {$\int$};
\node (m) [right= 0.2\columnwidth of int] {$m(t)$};
\node (w) [above= 0.1cm of states,color=red] {$W(t)$};
\node (c) [red, draw, above= 0.3cm of add] {$c$};
\node (eta) [draw,below= 0.7cm of int] {$-\eta$};
\node (add2) [circle,draw,minimum size = 3pt,inner sep=0pt] at (add |- eta) {$+$};
\node (theta) [blue, draw] at (states |- add2) {$-\theta$};
\node (d)[blue,below= 0.5cm of theta] {$d(t)^2$};

    \node (s1) [circle, draw=blue, fill=blue, minimum size = 3pt, inner sep=0pt, right = 10pt of theta.east] {};
    \node (s2) [circle, draw=blue, fill=blue, minimum size = 3pt, inner sep=0pt, right = 10pt of s1.center] {};
    \node (s) [below right = 0pt and 0pt of s1] {};
    \draw[blue,-,line width = 1.5pt] (s1.center) to ++(315:10pt);

\draw[red, -stealth, line width = 1pt] (w.east) to (w.east-|c.north) to (c.north);
\draw[red, -stealth, line width = 1pt] (c.south) to (add.north);
\draw[-stealth, line width = 1pt] (states.east) to (add.west);
\draw[-stealth, line width = 1pt] (add.east) to (int.west);
\draw[-stealth, line width = 1pt] (int.east) to (m.west);
\draw[-stealth, line width = 1pt] (m.south) to (m.south|-eta.east) to (eta.east);
\draw[-stealth, line width = 1pt] (m.south) to (m.south|-eta.east) to (eta.east);
\draw[-stealth, line width = 1pt] (eta.west) to (add2.east);
\draw[blue,-stealth, line width = 1pt] (d.north) to (theta.south);
\draw[blue,-stealth, line width = 1pt] (theta.east) to (s1.west);
\draw[blue,-stealth, line width = 1pt] (s2.east) to (add2.west);
\draw[blue,stealth-, dashed, line width = 1pt] (s.south)  -- node[midway, right] {$t\in(t_j+\tau,t_{j+1}),j\in\mathbb{N}$} ++(0,-0.7cm);
\draw[-stealth, line width = 1pt] (add2.north) to (add.south);
\end{tikzpicture}
    \caption{Schematic of the dynamics of $m(t)$}
    \label{fig:etc}
    \end{figure}
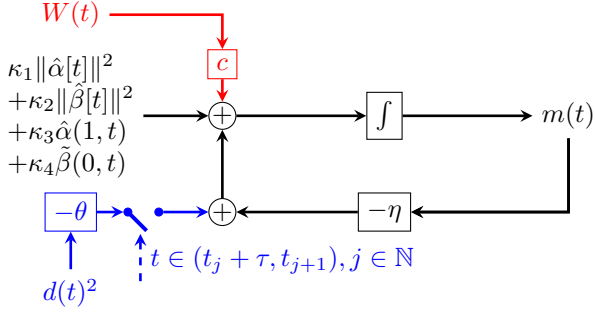

The following stability condition holds under the proposed ETC mechanism.
\begin{theorem}
    Under the event-triggering rule \eqref{tj+1}-\eqref{hat-v} and Assumption~\ref{assum:rhoq}, consider a set of increasing sequence of event times $I=\{t_j\}_{j\in\mathbb{N}}$, $t_0=0$ such that $\lim_{j\rightarrow\infty}t_j=+\infty$. For each $(u^0, v^0, \hat u^0, \hat v^0)^T\in L^2((0,1);\mathbb{R}^4))$, there exists a unique solution $(u, v, \hat u, \hat v)^T\in\mathcal{C}^0(\mathbb{R}_{>0};L^2((0,1);\mathbb{R}^4))$ to \eqref{sysu}-\eqref{sysbc}, \eqref{obs-sysu}-\eqref{obs-sysbc}. Let $\eta$, $\kappa_1$, $\kappa_2$, $\kappa_3$, $\kappa_4$, $a_1$, $a_2$, $c$, and $\omega_1>\omega_0$ be free positive parameters to be selected. In addition, let $a_0$, $\theta$, and $\gamma$ be
\begin{align}
    a_0 =& 4Aq^2\mathrm{e}^{\mu\phi_2(1)} + \frac{\epsilon_0}{a_2},\quad \theta = a_2\omega_0^2 + a_1\omega_0 + a_0,\label{a0-theta}\\
    \gamma\leq&\nu,\quad \nu=\min\left\{\nu_0,a_1,\eta\right\}\label{gamma-nu}\\
    \nu_0 =& \mu-\delta-\tfrac{r}{A}\max\left\{\tfrac{\epsilon_1}{a_2} + \kappa_1,\tfrac{\epsilon_2}{a_2} + \kappa_2\right\}\label{nu0}\\
    \mu\in&\left(0,\frac{2}{\phi_1(1) + \phi_2(1)}\ln\left(\frac{1}{2|\rho q|}\right)\right),~\delta<\mu,\label{mu}\\
    r=&\min\left\{\tfrac{\mathrm{e}^{-\mu\phi_1}}{\phi_1(1)},\tfrac{2q^2}{\phi_2(1)}\right\}^{-1},\label{r}\\
    A >& \max\bigg\{\tfrac{\mathrm{e}^{\mu\phi_1(1)}}{1-4\rho^2q^2\mathrm{e}^{\mu\left(\phi_1(1) + \phi_2(1)\right)}}\left(\tfrac{\epsilon_3}{a_2}+\kappa_3\right)\notag\\
    &\tfrac{r}{\mu-\delta}\max\{\tfrac{\epsilon_1}{a_2}+\kappa_1,\tfrac{\epsilon_2}{a_2}+\kappa_2\}\bigg\}.\label{A}
\end{align}
then the solution of the closed-loop system \eqref{sysu}-\eqref{sysbc}, \eqref{obs-sysu}-\eqref{obs-sysbc} with the control input $U(t)$ given by \eqref{U} updated at each $t_j\in I$, is globally exponentially stable in the spatial $L^2$ norm.
\end{theorem}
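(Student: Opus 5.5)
Well-posedness follows by applying Proposition~\ref{propo:wellposedness} on each interval $[t_j,t_{j+1}]$ and concatenating. Since $t_j\to\infty$ and the MDT $\tau>0$ holds by construction, this gives a global solution in $\mathcal{C}^0$. For stability I will work with the target systems \eqref{err-targetalpha}--\eqref{err-targetbc} and \eqref{obs-targetu}--\eqref{obs-targetbc}. The backstepping transformations and their inverses are bounded on $L^2$, so GES of $(\tilde\alpha,\tilde\beta,\hat\alpha,\hat\beta)$ transfers to $(u,v,\hat u,\hat v)$.

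The observer-error part is handled separately. The target system \eqref{err-targetalpha}--\eqref{err-targetbc} has $\tilde\alpha(0,t)=0$, so $\tilde\alpha\equiv0$ after time $\phi_1(1)$ and $\tilde\beta\equiv0$ after time $\phi_1(1)+\phi_2(1)$. Alternatively, the weighted Lyapunov functional
\[
V_2=\int_0^1\Big(\tfrac{D}{\lambda_1}\mathrm{e}^{-\mu\phi_1}\tilde\alpha^2+\tfrac{1}{\lambda_2}\mathrm{e}^{\mu\phi_2}\tilde\beta^2\Big)dx
\]
gives $\dot V_2\le-\mu V_2-\tilde\beta(0,t)^2$, with $D$ large enough to absorb $\rho^2$. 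This boundary dissipation term $-\tilde\beta(0,t)^2$ will absorb the $\kappa_4\tilde\beta(0,t)^2$, $\varepsilon_4\tilde\beta(0,t)^2$ and $q^2\tilde\beta(0,t)^2$ terms coming from the other parts.

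The core step is to show $\dot{\hat V}\le-\nu\hat V+(\text{terms in }\tilde\beta(0,t)^2)-cW$ on each $(t_j,t_j+\tau)$ and $(t_j+\tau,t_{j+1})$.

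\textbf{Computing $\dot V_1$.} Differentiate $V_1$ along \eqref{obs-targetu}--\eqref{obs-targetbc} and integrate by parts. The boundary terms are $-A\mathrm{e}^{-\mu\phi_1(1)}\hat\alpha(1)^2+A\hat\alpha(0)^2+B\mathrm{e}^{\mu\phi_2(1)}\hat\beta(1)^2-B\hat\beta(0)^2$. Substitute $\hat\alpha(0)=q(\hat\beta(0)+\tilde\beta(0))$ and $\hat\beta(1)=\rho\hat\alpha(1)+d$, then apply Young's inequality. Choosing $B\approx 2Aq^2$ cancels the $\hat\beta(0)^2$ term. The coefficient of $\hat\alpha(1)^2$ becomes negative precisely by the admissible range \eqref{mu} of $\mu$ (through $4\rho^2q^2\mathrm{e}^{\mu(\phi_1+\phi_2)}<1$). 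A remaining $4Aq^2\mathrm{e}^{\mu\phi_2(1)}d^2$ term is left over, and this is the first part of $a_0$. The $\bar p_i\tilde\beta(0)$ source terms are bounded by $\delta V_1+C\tilde\beta(0)^2$, giving $\dot V_1\le-(\mu-\delta)V_1-(\cdots)\hat\alpha(1)^2+4Aq^2\mathrm{e}^{\mu\phi_2(1)}d^2+C\tilde\beta(0)^2$.

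\textbf{Derivative of $fd^2$.} Write $\frac{d}{dt}(fd^2)=\dot f d^2+2fd\dot d\le \dot f d^2+a_2f^2d^2+\dot d^2/a_2$. Then apply \eqref{dotd2}, reading the $\varepsilon_0 d$ there as $\varepsilon_0 d^2$.
\begin{itemize}
\item On $(t_j,t_j+\tau)$, the Riccati law \eqref{f} cancels $a_2f^2d^2$ and leaves $-a_1fd^2-a_0d^2$. This absorbs $\varepsilon_0d^2/a_2$ and the $4Aq^2\mathrm{e}^{\mu\phi_2(1)}d^2$ term, which explains \eqref{a0-theta}.
\item On $(t_j+\tau,t_{j+1})$ we have $f=\omega_0$, and the $-\theta d^2$ in \eqref{m2} with $\theta=a_2\omega_0^2+a_1\omega_0+a_0$ plays the same role.
\end{itemize}

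\textbf{Adding $\dot m$.} The $\kappa_i$ terms add to the $\varepsilon_i/a_2$ terms. The norms $\|\hat\alpha\|^2$ and $\|\hat\beta\|^2$ are bounded by $\tfrac{r}{A}V_1$, using \eqref{r} and the weights. This yields the decay rate $\nu_0$ in \eqref{nu0}, positive by \eqref{A}, and the $\hat\alpha(1)^2$ terms are dominated by the choice of $A$. The term $cW$ remains, and $-\eta m$ gives rate $\eta$. Altogether
\[
\dot{\hat V}\le-\nu\hat V+cW+C'\tilde\beta(0,t)^2 .
\]

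\textbf{Handling the jumps and the $cW$ term.} At $t_j$ we have $d(t_j)=0$ and $m(t_j)=\omega_0d(t_j^-)^2=f(t_j^-)d(t_j^-)^2$, so $\hat V$ is continuous across events. For the $cW$ term I would not try to make it negative. Instead I use the barrier directly: by definition $\hat V(t)\le \mathrm{e}^{-\gamma t}\varrho(t)$. It then suffices to show that $\varrho$ stays bounded by a multiple of $\hat V(0)+V_2(0)$.

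\textbf{Main obstacle.} This last bound is the step I expect to be hardest. The plan is to show that $\mathrm{e}^{\gamma t}\hat V$ can only attain a new maximum where $W=0$. At such points the inequality reduces to $\frac{d}{dt}(\mathrm{e}^{\gamma t}\hat V)\le(\gamma-\nu)\mathrm{e}^{\gamma t}\hat V+C'\mathrm{e}^{\gamma t}\tilde\beta(0)^2$, which is nonpositive up to the observer term. The observer term is integrable and exponentially decaying (it vanishes in finite time), provided $\gamma<\mu$. A comparison argument on $\varrho$ then gives $\varrho(t)\le \hat V(0)+C''V_2(0)$. This must be done carefully because $\varrho$ is only nondecreasing and Lipschitz, so one-sided (Dini) derivatives are needed.

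Combining $\hat V(t)\le \mathrm{e}^{-\gamma t}(\hat V(0)+C''V_2(0))$ with the decay of $V_2$, and using $\hat V\ge V_1\gtrsim\|\hat\alpha\|^2+\|\hat\beta\|^2$ together with $m(0)=0$ and $d(0)=0$, gives GES of $(\hat\alpha,\hat\beta,\tilde\alpha,\tilde\beta)$ in $L^2$. Applying the inverse transformations then gives GES of $(u,v,\hat u,\hat v)$.
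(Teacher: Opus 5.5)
Your proposal is correct and follows essentially the paper's route. It uses the same weighted functionals on the target systems, the Riccati variable $f$ and the $-\theta d(t)^2$ term to absorb the holding error, the rate $\nu=\min\{\nu_0,a_1,\eta\}$, and a Dini-derivative comparison on $\varrho$ based on the fact that $\mathrm{e}^{\gamma t}\hat V$ can only reach a new maximum where $W=0$. The only difference is bookkeeping: the paper merges the observer error into $V=\hat V+V_2$, choosing $D$ to absorb all $\tilde\beta(0,t)^2$ terms and using $D^+\varrho\le-\dot V_4$, whereas you keep $C'\tilde\beta(0,t)^2$ as an integrable forcing bounded through $V_2$, which is equivalent. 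Your choice $B=2Aq^2$ is the one consistent with $a_0$ and with the $\hat\alpha(1,t)^2$ coefficient, so the paper's stated $B=Aq^2$ appears to be a typo.
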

\begin{proof}
    By the existence of a MDT $\tau$ imposed by the triggering condition \eqref{tj+1} along with Proposition~\ref{propo:wellposedness}, the existence of a unique solution $(u, v, \hat u, \hat v)^T\in\mathcal{C}^0(\mathbb{R}_{>0};L^2((0,1);\mathbb{R}^4))$ to \eqref{sysu}-\eqref{sysbc}, \eqref{obs-sysu}-\eqref{obs-sysbc} can be shown. Subsequently, consider the following Lyapunov function candidate
    \begin{align}
        V(t) =& \hat V(t) + V_2(t), \label{v}\\
        V_2(t) =& \int_0^1 \bigg(\frac{C}{\lambda_1(x)}\mathrm{e}^{-\mu\phi_1(x)}\tilde \alpha(x,t)^2\\
        &+\frac{D}{\lambda_2(x)}\mathrm{e}^{\mu\phi_2(x)}\tilde \beta(x,t)^2\bigg)dx,\label{v2}
    \end{align}
    where $\hat V(t)$ is defined in \eqref{hat-v}. Taking the derivative of \eqref{v} with respect to time $t\in(t_j,t_j+\tau)\cup(t_j+\tau,t_{j+1}),j\in\mathbb{N}$ along the solution of \eqref{err-targetalpha}-\eqref{err-targetbc},\eqref{obs-targetu}-\eqref{obs-targetbc}, using Young's inequality, and \eqref{dotd2}, we have
    \begin{align*}
        &\dot V(t) \leq -\left(A\mathrm{e}^{-\mu\phi_1(1)} - 2B\rho^2\mathrm{e}^{\mu\phi_2(1)}-\tfrac{\epsilon_3}{a_2} - \kappa_3\right)\hat \alpha(1,t)^2\\
        &-\left(B-2Aq^2\right)\hat \beta(0,t)^2 - \left(C\mathrm{e}^{-\mu\phi_1(1)} - D\rho^2\mathrm{e}^{\mu\phi_2(1)}\right)\tilde \alpha(1,t)^2\\
        &-\left(D - 2Aq^2 - \tfrac{A\|\bar p_1\|^2}{\underline{\lambda}_1\delta} - \tfrac{B\mathrm{e}^{\mu\phi_2(1)}\|\bar p_2\|^2}{\underline{\lambda}_2\delta} - \tfrac{\epsilon_4}{a_2} - \kappa_4\right)\tilde \beta(0,t)^2\\
        &-(\mu-\delta)V_1(t) - \mu V_2(t)\\
        &+ \max\left\{\tfrac{\epsilon_1}{a_2} + \kappa_1,\tfrac{\epsilon_2}{a_2} + \kappa_2\right\}(\|\hat \alpha[t]\|^2 + \|\hat \beta[t]\|^2)\\
        &+\left(\dot f(t) + a_2f(t)^2+ \tfrac{\epsilon_0}{a_2} + 2B\mathrm{e}^{-\mu\phi_2(1)}\right)d(t)^2 + \dot m(t)\\
        & - \kappa_1\|\hat \alpha[t]\|^2 - \kappa_2\|\hat \beta[t]\|^2 - \kappa_3\hat \alpha(1,t)^2 - \kappa_4\tilde \beta(0,t)^2,
    \end{align*}
    for some $\delta>0$ and $a_2>0$. Selecting $B=Aq^2$, $C=D\rho^2\mathrm{e}^{\mu\left(\phi_1(1)+\phi2(1)\right)}$, $\nu_0$ as in \eqref{nu0}, and $r$ as in \eqref{r}, the following expression holds:
    \begin{align*}
        &\dot V(t) \leq-\nu_0V_1(t) - \mu V_2(t)-\left(D - 2Aq^2 - \tfrac{A\|\bar p_1\|^2}{\underline{\lambda}_1\delta}\right.\\
        &\left.- \tfrac{B\mathrm{e}^{\mu\phi_2(1)}\|\bar p_2\|^2}{\underline{\lambda}_2\delta}- \tfrac{\epsilon_4}{a_2} - \kappa_4\right)\tilde \beta(0,t)^2\\
        &-\left(A\left(\mathrm{e}^{-\mu\phi_1(1)}-4\rho^2q^2\mathrm{e}^{\mu\phi_2(1)}\right)-\tfrac{\epsilon_3}{a_2}- \kappa_3\right)\hat \alpha(1,t)^2\\
        &+\left(\dot f(t) + a_2f(t)^2+ \tfrac{\epsilon_0}{a_2} + 2B\mathrm{e}^{-\mu\phi_2(1)}\right)d(t)^2\\
        & + \dot m(t)- \kappa_1\|\hat \alpha[t]\|^2 - \kappa_2\|\hat \beta[t]\|^2 - \kappa_3\hat \alpha(1,t)^2 - \kappa_4\tilde \beta(0,t)^2. 
    \end{align*}
    Under Assumption~\ref{assum:rhoq}, let $\mu$ satisfy \eqref{mu}. Subsequently, let $\delta$ be such that $\delta<\mu$, $A$ as in \eqref{A}, and $D$ as
    \begin{align*}
        D = 2Aq^2 + \tfrac{A\|\bar p_1\|^2}{\underline{\lambda}_1\delta} + \tfrac{B\mathrm{e}^{\mu\phi_2(1)}\|\bar p_2\|^2}{\underline{\lambda}_2\delta} + \tfrac{\epsilon_4}{a_2} + \kappa_4.
    \end{align*}
    Hence, we obtain the following expression for $t\in(t_j,t_j+\tau)\cup(t_j+\tau,t_{j+1}),j\in\mathbb{N}$:
    \begin{align}
        &\dot V(t) \leq-\nu_0V_1(t) - \mu V_2(t)\notag\\
        &+\left(\dot f(t) + a_2f(t)^2+ \tfrac{\epsilon_0}{a_2} + 2B\mathrm{e}^{-\mu\phi_2(1)}\right)d(t)^2+ \dot m(t)\notag\\
        & - \kappa_1\|\hat \alpha[t]\|^2 - \kappa_2\|\hat \beta[t]\|^2 - \kappa_3\hat \alpha(1,t)^2 - \kappa_4\tilde \beta(0,t)^2.\label{dotv-1}
    \end{align}
    Consider \eqref{dotv-1} for $t\in(t_j,t_j+\tau),j\in\mathbb{N}$. Using \eqref{m1} and \eqref{f} with $a_0$, $\nu$ as in \eqref{a0-theta},\eqref{gamma-nu} respectively, the following expression holds:
    \begin{align}
        \dot V(t)\leq -\nu V(t) + cW(t).\label{dotv-2}
    \end{align}
    Similarly, consider \eqref{dotv-1} for $t\in(t_j+\tau,t_{j+1}),j\in\mathbb{N}$. Using \eqref{m2} and \eqref{f} with $\theta$, $\nu$ as in \eqref{a0-theta},\eqref{gamma-nu} respectively, the following expression holds:
    \begin{align}
        \dot V(t)\leq -\nu V(t) + cW(t).\label{dotv-3}
    \end{align}
    Let $\gamma\leq\nu$ as in \eqref{gamma-nu}. To simplify the subsequent derivations, define
    \begin{align}
        V_3(t) = \mathrm{e}^{\gamma t}\hat V(t),\quad V_4(t) = \mathrm{e}^{\gamma t} V_2(t).\label{v3-v4}
    \end{align}
    Consequently, from \eqref{w},\eqref{dotv-2},\eqref{dotv-3}, and \eqref{v3-v4} the following expression holds for $t\in(t_j,t_j+\tau)\cup(t_j+\tau,t_{j+1}),j\in\mathbb{N}$:
    \begin{align}
        \dot V_3(t) + \dot V_4(t) \leq c\varrho(t) - cV_3(t).\label{dotv-4}
    \end{align}
    Note that we have $(\hat \alpha,\hat \beta)^T\in \mathcal{C}^0(\mathbb{R}_{\geq0};L^2((0,1);\mathbb{R}^2))$ and $(\tilde \alpha,\tilde \beta)^T\in \mathcal{C}^0(\mathbb{R}_{\geq0};L^2((0,1);\mathbb{R}^2))$. Therefore, $V_3,V_4,\varrho\in \mathcal{C}^0(\mathbb{R}_{\geq0};\mathbb{R}_{\geq0})$.
    
    Taking the derivative of \eqref{v2} with respect to time $t\geq0$, it can be shown that 
    \begin{align}
        \dot V_4(t) \leq& 0,\notag\\
        V_4(t) \leq& V_4(0).\label{v4-bound}
    \end{align}
    Now consider times $t^*\in(t_j,t_j+\tau)\cup(t_j+\tau,t_{j+1}),j\in\mathbb{N}$ at which $\varrho(t^*)$ increases. Note that by definition of $\varrho(t)$ in \eqref{varrho}, $\varrho(t^*) = V_3(t^*)$. From \eqref{dotv-4}
    \begin{align}
        \dot V_3(t^*) \leq -\dot V_4(t^*), \label{dotv-6}
    \end{align}
    since \eqref{dotv-6} is holds when $\varrho(t)$ is updated and $\varrho(t)$ is constant otherwise, the following exists for $t\in(t_j,t_j+\tau)\cup(t_j+\tau,t_{j+1}),j\in\mathbb{N}$:
    \begin{align}
        D^+ \varrho(t) \leq -\dot V_4(t),
    \end{align}
    where we denote $D^+$ as the upper right Dini derivative. Using \cite[Lemma 3.4]{khalil2002nonlinear} for each interval in $(t_j,t_j+\tau)\cup(t_j+\tau,t_{j+1}),j\in\mathbb{N}$, the following holds:
    \begin{align*}
        \varrho(t) - \varrho(0) \leq& V_4(0) - V_4(t),\\
        \varrho(t)\leq& V_3(0) + V_4(0) -V_4(t),\\
        \varrho(t)\leq& V_3(0) + V_4(0).
    \end{align*}
    Hence we have
    \begin{align}
        V_3(t) \leq V_3(0) + V_4(0), \forall t\geq0.\label{v3-bound}
    \end{align}
    Using \eqref{v3-v4} and \eqref{v4-bound}, \eqref{v3-bound}, we have
    \begin{align}
        &\hat V(t) + V_2(t) \leq \mathrm{e}^{-\gamma t}(\hat V(0) + 2V_2(0)),\label{finalv-bound}
    \end{align}
    Finally, using \eqref{finalv-bound} and the invertibility of the backstepping transformations, it can be shown that the solution of the closed-loop system \eqref{sysu}-\eqref{sysbc}, \eqref{obs-sysu}-\eqref{obs-sysbc} with the control input $U(t)$ given by \eqref{U} updated at each $t_j\in I$, is globally exponentially stable in the spatial $L^2$ norm.
\end{proof}
\section{Numerical Simulations}\label{sec:simulations}
We consider the system \eqref{sysu}-\eqref{sysbc} with plant parameters $\lambda_1(x)=1$, $\lambda_2(x)=1$, $c_1(x)=1$ and $c_2(x)=1.5$ $\forall x\in[0,1]$, and $q=0.5$, $\rho=0.5$ to satisfy Assumption~\ref{assum:rhoq}. The initial conditions are selected as $u^0(x)=qv^0(x)$, $v^0(x)=10(1-x)$ and $\hat u^0(x)=0$, $\hat v^0(x)=0$, $\forall x\in[0,1]$. 

The free triggering parameters are selected as $\eta=1$, $a_1=1$, $a_2=1$, $\kappa_1=1$, $\kappa_2=1$, $\kappa_3=1$, $\kappa_4=1$, $\omega_0=1$, $\omega_1=10$, and $c=1$. We select $\mu=0.173 $ such that \eqref{mu} is satisfied and $\delta=0.17$ such that $\delta<\mu$. Subsequently, $A=253.75$ is defined such that \eqref{A} holds. Finally, we select $\gamma = 0.1236$ such that $\gamma\leq\nu$. Under the proposed system and the parameters selected, the MDT is $\tau=0.0137s$.

Fig.~\ref{fig:norm} shows the evolution of the spatial $L^2$ norm of the states over time under the P-ETC input \eqref{U} presented in Fig.~\ref{fig:U}. The resulting dwell-times are shown in Fig.~\ref{fig:dwell_times}. Figures~\ref{fig:norm},\ref{fig:U},\ref{fig:dwell_times} also show the evolution of the system under ETC, that is, without the effect of the performance residual (with $c=0$). The evolution of $\hat{V}(t)$ in \eqref{hat-v} together with the performance barrier $\mathrm{e}^{-\gamma t}\varrho(t)$ is illustrated in Fig.~\ref{fig:V}. The design parameter $c$ in \eqref{m1},\eqref{m2} provides a means to increase the dwell-times at the expense of the convergence rate of $\hat{V}(t)$ to zero. Fig.~\ref{fig:c} compares the evolution of $\hat{V}(t)$ with time for various values of $c$.
\begin{figure}[t]
\centering
\includegraphics[width=\columnwidth]{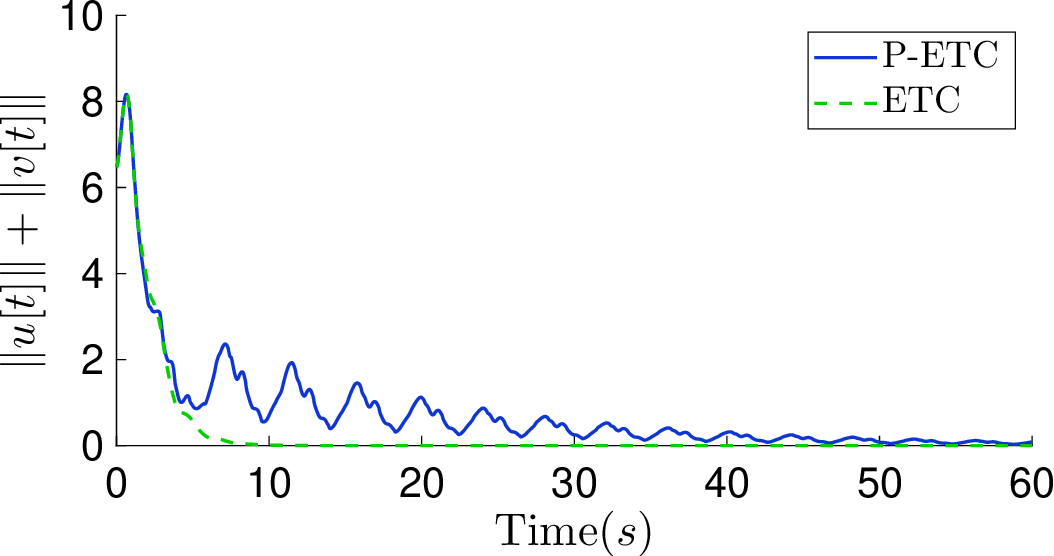}
\caption{Variation of the spatial $L^2$ norm of the states with time}\label{fig:norm}
\end{figure}
\begin{figure}[t]
\centering
\includegraphics[width=\columnwidth]{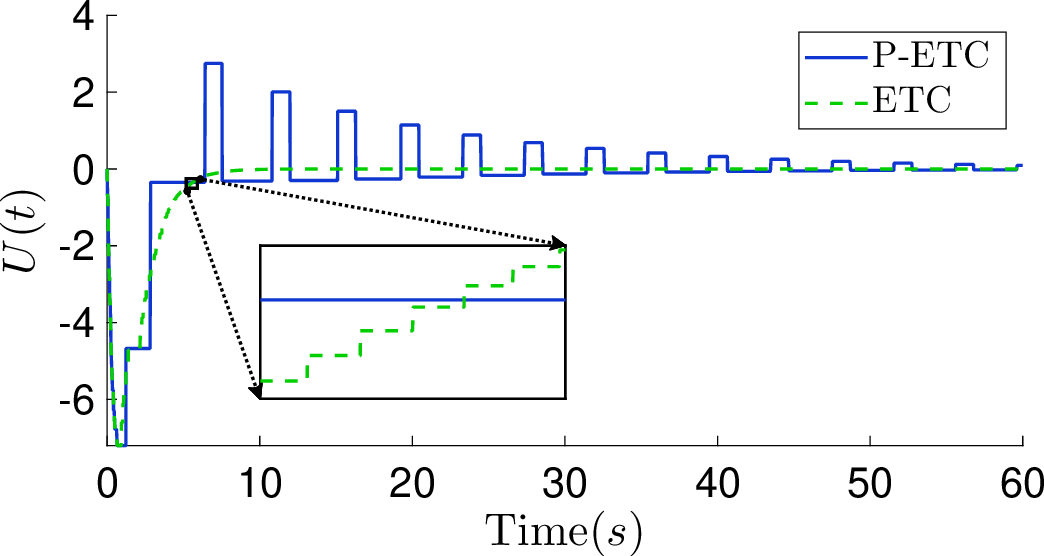}
\caption{Variation of the control input with time}\label{fig:U}
\end{figure}
\begin{figure}[t]
\centering
\includegraphics[width=\columnwidth]{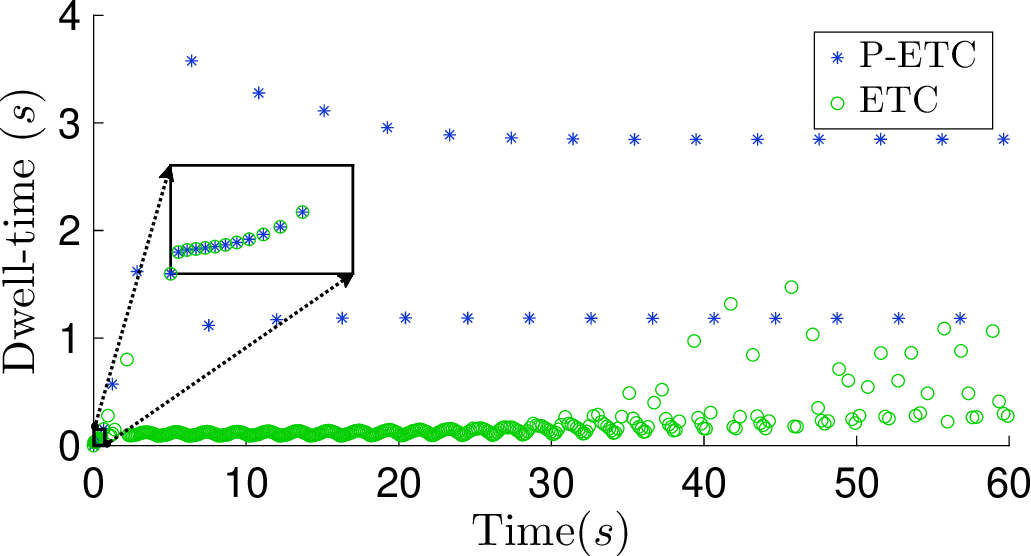}
\caption{Dwell-times under the proposed triggering condition}\label{fig:dwell_times}
\end{figure}
\begin{figure}[t]
\centering
\includegraphics[width=\columnwidth]{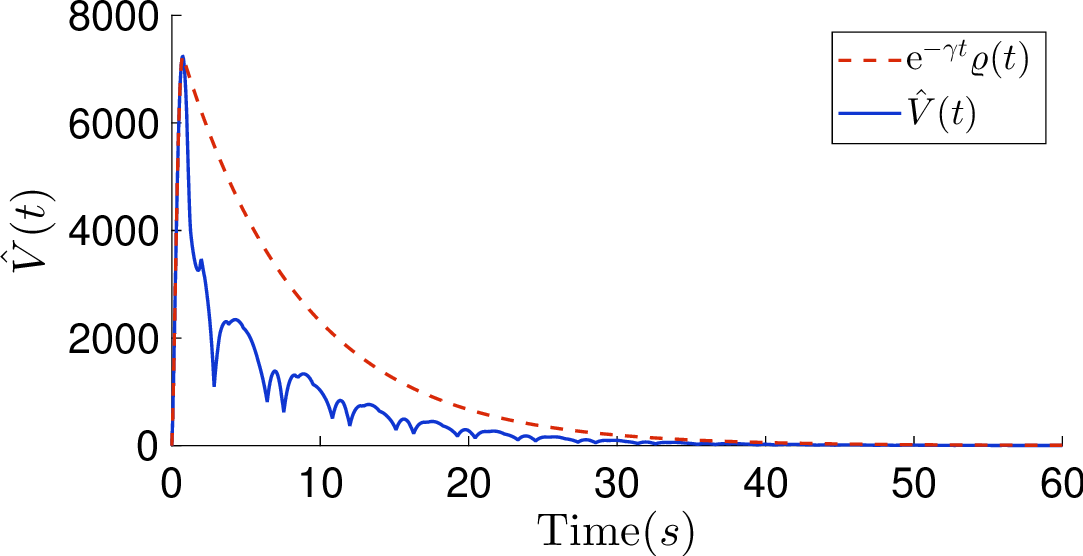}
\caption{Variation of $\hat V(t)$ with time}\label{fig:V}
\end{figure}
\begin{figure}[t]
\centering
\includegraphics[width=\columnwidth]{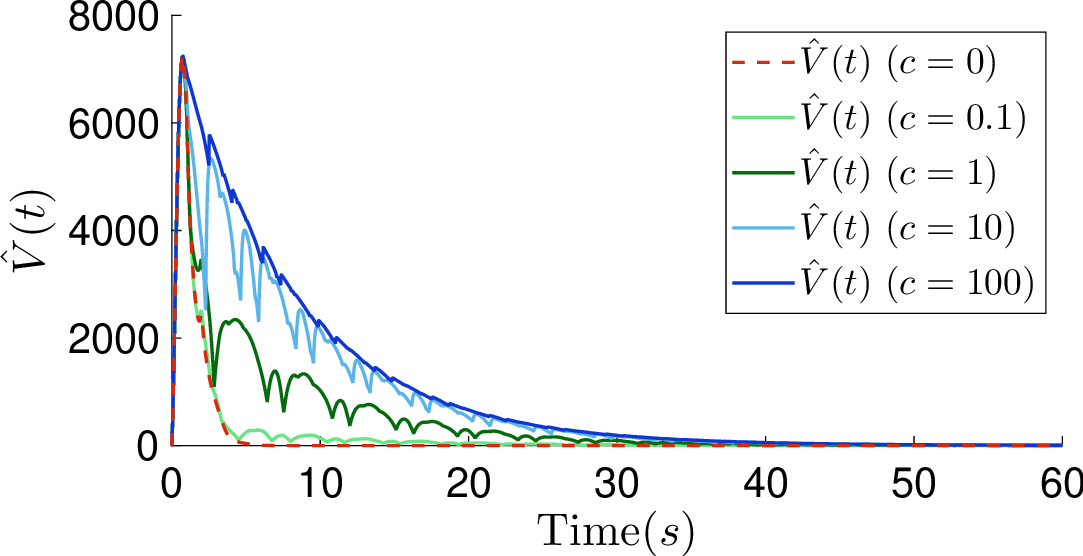}
\caption{Variation of $\hat V(t)$ with time for different values of $c$}\label{fig:c}
\end{figure}

It can be observed that the dwell-times under P-ETC are significantly shorter and coincidences with the dwell-times under ETC initially compared to the later stages. This behavior can be explained by examining Fig.~\ref{fig:V} together with the performance residual $W(t)$ in \eqref{w} and the performance-barrier $\mathrm{e}^{-\gamma t}\varrho(t)$ for $\varrho(t)$ in \eqref{varrho}. Due to the selected initial values of the observer states,  $\varrho(t)$ initially increases. During this phase, the performance residual satisfies $W(t)=0$, causing the system to behave similarly to the standard observer-based ETC scheme. Once $\varrho(t)$ is sufficiently large and its growth ceases,  the performance residual becomes non-zero and influences the evolution of the dynamic triggering variable $m(t)$, producing longer dwell-times.
\section{Conclusion}\label{sec:conclusion}
We present a novel observer-based P-ETC framework for a class of $2\times2$ linear hyperbolic PDEs. The proposed triggering condition guarantees GES of the spatial $L^2$ norm of the solution of the system while substantially increasing the dwell-times between consecutive events compared to ETC that do not incorporate a performance barrier. Simulation results are provided to support the theoretical findings.  The proposed P-ETC framework can be extended to existing dynamic observer-based ETC approaches, provided that the observer converges exponentially to the system states and operates independently of the triggering condition. Investigating such extensions constitutes an interesting direction for future research.

\bibliographystyle{IEEEtranS}
\bibliography{references}

@article{tabuada2007event,
  title={Event-triggered real-time scheduling of stabilizing control tasks},
  author={Tabuada, Paulo},
  journal={IEEE Transactions on Automatic control},
  volume={52},
  number={9},
  pages={1680--1685},
  year={2007},
  publisher={IEEE}
}

@inproceedings{heemels2012introduction,
  title={An introduction to event-triggered and self-triggered control},
  author={Heemels, Wilhelmus PMH and Johansson, Karl Henrik and Tabuada, Paulo},
  booktitle={2012 IEEE Conference on Decision and Control (CDC)},
  pages={3270--3285},
  year={2012}
}

@ARTICLE{girard2015dynamic,
  author={Girard, Antoine},
  journal={IEEE Transactions on Automatic Control}, 
  title={Dynamic Triggering Mechanisms for Event-Triggered Control}, 
  year={2015},
  volume={60},
  number={7},
  pages={1992-1997}
}

@article{ong2024performance,
  title={Performance-barrier-based event-triggered control with applications to network systems},
  author={Ong, Pio and Cort{\'e}s, Jorge},
  journal={IEEE Transactions on Automatic Control},
  year={2024},
  volume={69},
  number={7},
  pages={4230-4244},
  publisher={IEEE}
}

@article{bhathiya2026_etc_stability,
title = {Global exponential stabilization of 2 × 2 linear hyperbolic {PDE}s via dynamic event-triggered backstepping control},
journal = {Automatica},
volume = {183},
pages = {112617},
year = {2026},
issn = {0005-1098},
author = {Bhathiya Rathnayake and Mamadou Diagne}
}

@article{rathnayake2021observer,
  title={Observer-based event-triggered boundary control of a class of reaction-diffusion {PDEs}},
  author={Rathnayake, Bhathiya and Diagne, Mamadou and Espitia, Nicol{\'a}s and Karafyllis, Iasson},
  journal={IEEE Transactions on Automatic Control},
  volume={67},
  number={6},
  pages={2905--2917},
  year={2022},
  publisher={IEEE}
}

@article{espitia2020observer,
  title={Observer-based event-triggered boundary control of a linear $2 \times 2$  hyperbolic systems},
  author={Espitia, Nicol{\'a}s},
  journal={Systems \& Control Letters},
  volume={138},
  pages={104668},
  year={2020},
  publisher={Elsevier}
}

@article{rathnayake2025performance,
  title={Performance-barrier event-triggered control of a class of reaction--diffusion {PDEs}},
  author={Rathnayake, Bhathiya and Diagne, Mamadou and Cort{\'e}s, Jorge and Krstic, Miroslav},
  journal={Automatica},
  volume={174},
  pages={112181},
  year={2025},
  publisher={Elsevier}
}

@article{zhang2025performance,
  title={Performance-barrier event-triggered {PDE} control of traffic flow},
  author={Zhang, Peihan and Rathnayake, Bhathiya and Diagne, Mamadou and Krstic, Miroslav},
  journal={IEEE Transactions on Automatic Control},
  year={2025},
  publisher={IEEE}
}

@article{vazquez2026_backstepping_survery,
title = {Backstepping for partial differential equations: A survey},
journal = {Automatica},
volume = {183},
pages = {112572},
year = {2026},
author = {Rafael Vazquez and Jean Auriol and Federico Bribiesca-Argomedo and Miroslav Krstic}
}

@INPROCEEDINGS{vazquez2011_2x2hyperbolic,
  author={Vazquez, Rafael and Krstic, Miroslav and Coron, Jean-Michel},
  booktitle={2011 50th IEEE Conference on Decision and Control and European Control Conference}, 
  title={Backstepping boundary stabilization and state estimation of a $2\times2$ linear hyperbolic system}, 
  year={2011},
  volume={},
  number={},
  pages={4937-4942}
}

@article{krstic2008_1xhyperbolic,
title = {Backstepping boundary control for first-order hyperbolic {PDEs} and application to systems with actuator and sensor delays},
journal = {Systems \& Control Letters},
volume = {57},
number = {9},
pages = {750-758},
year = {2008},
issn = {0167-6911},
author = {Miroslav Krstic and Andrey Smyshlyaev}
}

@ARTICLE{meglio2013,
  author={Di Meglio, Florent and Vazquez, Rafael and Krstic, Miroslav},
  journal={IEEE Transactions on Automatic Control}, 
  title={Stabilization of a System of  $n+1$ Coupled First-Order Hyperbolic Linear {PDEs} With a Single Boundary Input}, 
  year={2013},
  volume={58},
  number={12},
  pages={3097-3111}}

@ARTICLE{hu2016,
  author={Hu, Long and Di Meglio, Florent and Vazquez, Rafael and Krstic, Miroslav},
  journal={IEEE Transactions on Automatic Control}, 
  title={Control of Homodirectional and General Heterodirectional Linear Coupled Hyperbolic {PDEs}}, 
  year={2016},
  volume={61},
  number={11},
  pages={3301-3314}}

@book{anfinsen2019adaptive,
  title={Adaptive control of hyperbolic {PDE}s},
  author={Anfinsen, Henrik and Aamo, Ole Morten},
  year={2019},
  publisher={Springer}
}

@ARTICLE{coron2007,
  author={Coron, Jean-Michel and d'Andrea-Novel, Brigitte and Bastin, Georges},
  journal={IEEE Transactions on Automatic Control}, 
  title={A Strict {Lyapunov} Function for Boundary Control of Hyperbolic Systems of Conservation Laws}, 
  year={2007},
  volume={52},
  number={1},
  pages={2-11}
  }

@article{litrico2009,
title = {Boundary control of hyperbolic conservation laws using a frequency domain approach},
journal = {Automatica},
volume = {45},
number = {3},
pages = {647-656},
year = {2009},
issn = {0005-1098},
author = {Xavier Litrico and Vincent Fromion}
}

@article{diagne2017backstepping,
  title={Backstepping stabilization of the linearized {Saint-Venant-Exner} model},
  author={Diagne, Ababacar and Diagne, Mamadou and Tang, Shuxia and Krstic, Miroslav},
  journal={Automatica},
  volume={76},
  pages={345--354},
  year={2017}
}

@article{somathilake2024_sediment,
title = {Output Feedback Control of Suspended Sediment Load Entrainment in Water Canals and Reservoirs},
journal = {IFAC-PapersOnLine},
volume = {58},
number = {28},
pages = {983-988},
year = {2024},
note = {The 4th Modeling, Estimation, and Control Conference – 2024},
issn = {2405-8963},
author = {Eranda Somathilake and Mamadou Diagne}
}

@article{somathilake2025_etc,
title = {Output feedback periodic event-triggered and self-triggered boundary control of coupled $2\times2$ linear hyperbolic {PDEs}},
journal = {Automatica},
volume = {179},
pages = {112433},
year = {2025},
issn = {0005-1098},
author = {Eranda Somathilake and Bhathiya Rathnayake and Mamadou Diagne}
}

@article{yu2019traffic,
  title={Traffic congestion control for {Aw--Rascle--Zhang} model},
  author={Yu, Huan and Krstic, Miroslav},
  journal={Automatica},
  volume={100},
  pages={38--51},
  year={2019},
  publisher={Elsevier}
}

@INPROCEEDINGS{bhathiya2025_gas,
  author={Rathnayake, Bhathiya and Zlotnik, Anatoly and Tokareva, Svetlana and Diagne, Mamadou},
  booktitle={2025 American Control Conference (ACC)}, 
  title={Setpoint Tracking and Disturbance Attenuation for Gas Pipeline Flow Subject to Uncertainties using Backstepping}, 
  year={2025},
  volume={},
  number={},
  pages={996-1001}
}

@article{coron2021_time_varying_coefficients,
title = {Boundary stabilization in finite time of one-dimensional linear hyperbolic balance laws with coefficients depending on time and space},
journal = {Journal of Differential Equations},
volume = {271},
pages = {1109-1170},
year = {2021},
issn = {0022-0396},
author = {Jean-Michel Coron and Long Hu and Guillaume Olive and Peipei Shang}
}

@book{khalil2002nonlinear,
  title={Nonlinear systems},
  author={Khalil, Hassan K and Grizzle, Jessy W},
  volume={3},
  year={2002},
  publisher={Prentice hall Upper Saddle River, NJ}
}
\end{document}